\title{Cluster Variables and Perfect Matchings of Subgraphs of the $dP_3$ Lattice}
\author{Sicong Zhang}
\date{October 28, 2012}
\theoremstyle{definition}
\newtheorem*{defn}{Definition}
\newtheorem{rk}{Remark}
\theoremstyle{plain}
\newtheorem{prop}{Proposition}
\newtheorem*{thm}{Theorem}
\begin{document}
\maketitle
\begin{abstract}
We give a combinatorial intepretation of cluster variables of a specific cluster algebra under a mutation sequence of period 6, in terms of perfect matchings of subgraphs of the brane tiling dual to the quiver associated with the cluster algebra. 
\end{abstract}
\section{Introduction}
Cluster algebras, introduced by Fomin and Zelevinsky in the past decade \cite{FZ01,FZ02}, arise natually in diverse areas of mathematics such as total positivity, tropical geometry and Lie theory. Previous work, such as \cite{BPW09,M07,MP06,MS08}, gave combinatorial intepretations for the cluster variables as perfect matchings of graphs, under suitable weighting schemes. Of particular interest is the situation where the graphs are directly related to the quiver of the cluster algebra, namely when they are subgraphs of the dual of the quiver. Some motivating examples include Speyer's proof of the Aztec Diamond Theorem \cite{S04} and Jeong's Tree Phenomenon and Superposition Phenomenon \cite{J11}.

In this report we give a specific example of cluster variables whose Laurent expansions can be intepreted as perfect matchings of subgraphs of the brane tiling dual to the original quiver.


\section{Quiver, $ dP_3 $ Lattice and Cluster Variables}

\begin{figure}
\centering
\includegraphics[width=\textwidth]{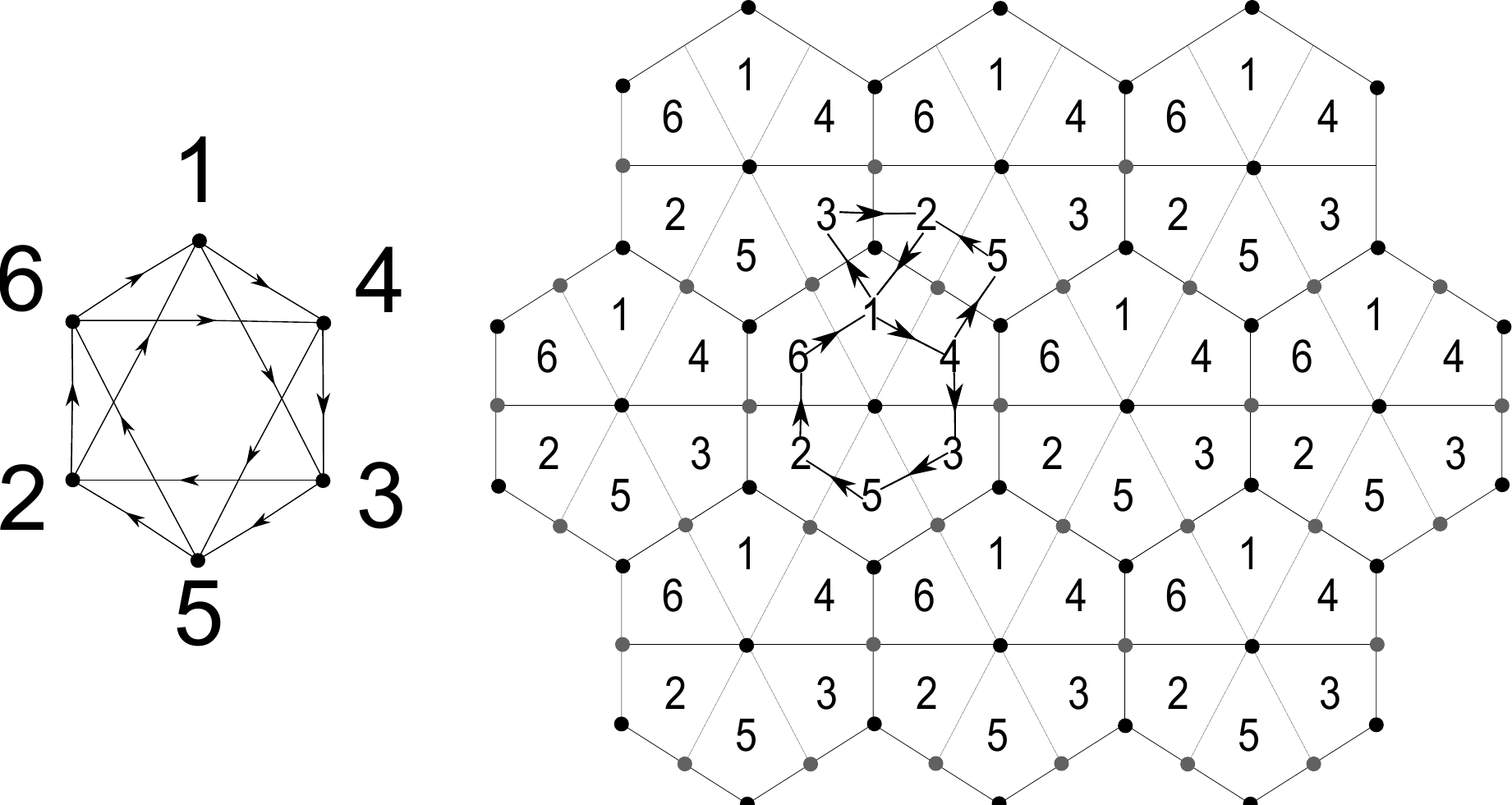}
\caption{Quiver $Q$ and its brane tiling, $dP_3$ lattice}
\label{dP3}
\end{figure}

Figure~\ref{dP3} shows the quiver $Q$ we are working with, which is listed in \cite{HS12} as Model 10 Phase a. The \emph{brane tiling} (doubly periodic planar bipartite graph) dual to $Q$ is a superposition of the triangular lattice with its dual hexagonal lattice; we follow \cite{CY10} and call it the $dP_3$ lattice.

\begin{defn}
Let $\sigma :=(15)(24)(36)$, a permutation of 6 numbers. It corresponds to $180^\circ$ rotation of $Q$ or the lattice.
\end{defn}

\begin{figure}
\centering
\def\svgwidth{\columnwidth}
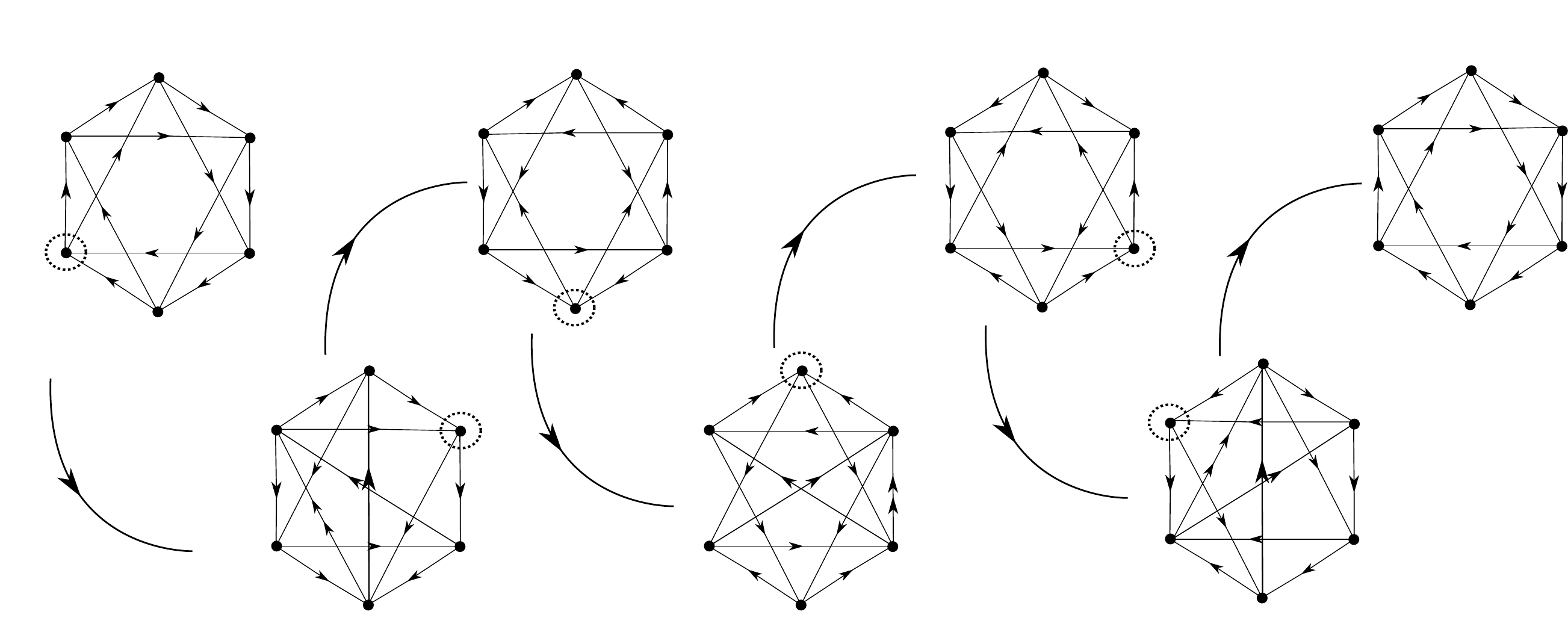
\caption{Quiver $Q$ under the periodic mutation sequence $2, 4, 5, 1, 3, 6,\dotsc$}
\label{Mutation}
\end{figure}

Mutation at node $a$ and at node $\sigma (a)$ commutes since there are no arrows between the two nodes. The two mutations preformed as a pair reverse arrows incident to $a$ and $\sigma (a)$. Therefore after cyclically mutating all 3 pairs of nodes, we will get back $Q$, i.e. $Q$ has period 6. Consider the periodic mutation sequence at vertices $2, 4, 5, 1, 3, 6,\dotsc$ (Figure~\ref{Mutation}). Name the new cluster variables by:
$$\stackrel{\mu_2}{\rightarrow} y_1 \stackrel{\mu_4}{\rightarrow} y'_1 \stackrel{\mu_5}{\rightarrow} y_2  \stackrel{\mu_1}{\rightarrow} y'_2 \stackrel{\mu_3}{\rightarrow} y_3 \stackrel{\mu_6}{\rightarrow} y'_3
\stackrel{\mu_2}{\rightarrow} \dots $$
By symmetry of $Q$ under $\sigma$, $y'_N = \sigma (y_N)$, where $\sigma$ acts naturally on Laurent polynomials via $\sigma (x_i)=x_{\sigma (i)}$. 

For consistency, set $y_{-2}=x_2$, $y'_{-2}=x_4$, $y_{-1}=x_5$, $y'_{-1}=x_1$, $y_{0}=x_3$, $y'_{0}=x_6$. The exchange relation now becomes:
\begin{equation}\label{exchange}
y_N y_{N-3} = y_{N-1}y_{N-2} + y'_{N-1}y'_{N-2}
\end{equation}
for $N \ge 1$.

\section{Diamonds on the $dP_3$ lattice}
We will discuss a sequence of subgraphs on the $dP_3$ lattice, first introduced by Propp \cite{P99}, further studied by Ciucu under the name of ``Aztec Dragons" \cite{C05}, and recently extended to half-integral order by Cottrell and Young \cite{CY10}, in which they are called diamonds.

\begin{defn}
A \emph{strip} is a subgraph of $dP_3$ bounded between two neighboring horizontal lines. A \emph{block} is one of the four following graphs bounding the union of 3 adjacent faces: $[254]$, $[316]$, $[214]$, $[356]$ (Figure~\ref{StripAndBlocks}).
\end{defn}

\begin{figure}
\centering
\includegraphics[width=\textwidth]{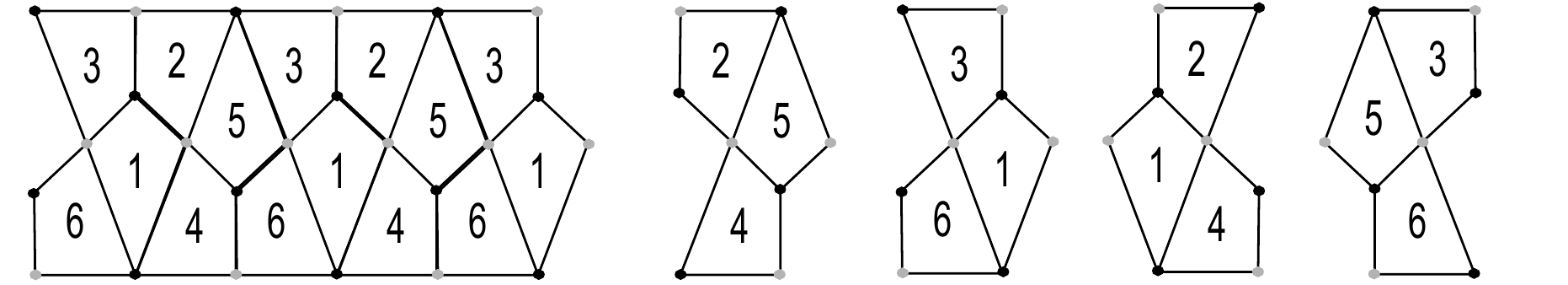}
\caption{A strip and four types of blocks}
\label{StripAndBlocks}
\end{figure}

Fix a strip $s_0$, let $s_j$ be the $j$-th strip north (resp. south) of $s_0$ if $j>0$ (resp. $j<0$). For $j\leq0$, tile $s_j$ alternatingly by blocks $[ 254]$ and $[ 316]$, while for $j>0$, tile $s_j$ by blocks $[ 214]$ and $[ 356]$. Note that each block is adjacent to one block in each of the 4 directions. Label each block $T(i,j)$ such that $T(0,0)$ is $[254]$, block $T(i,j)$ is in strip $s_j$, directly east of block $T(i-1,j)$ and north of $T(i,j-1)$.

\begin{defn}
For $n \in \mathbb{Z}_{>0}$, a \emph{diamond} of order $n$ is
$$D_n=\bigcup_{\left|i+n-1\right|+\left|j\right| \leq n-1} T(i,j)$$
and a \emph{diamond} of order $n+1/2$ is
$$D_{n+1/2}=D_n\bigcup \left(\bigcup_{\substack{-n+2\leq i\leq 1\\ 1\leq j\leq 2-i}} T(i,j)\right) \bigcup S_3 \bigcup S_2$$
where $S_3$ (resp. $S_2$) is the square labeled 3 (resp. 2) in the block $T(1,0)$ (resp. $T(2,0)$).
Also define $D_k=\emptyset $ for $ k \leq 0 $, and $D_{1/2}$ to be a square labeled 2.
$ D'_m $ for $m \in \frac{1}{2} \mathbb{Z}_{\ge 0}$ is obtained by rotating $ D_m $ by $180^\circ$ and relabeling faces according to $ \sigma $.
\end{defn}

\begin{rk}
The definition of integer order diamond here differs from the one given by \cite{CY10} by a reflection about a horizontal line.
\end{rk}

\begin{figure}
\centering
\def\svgwidth{\columnwidth}
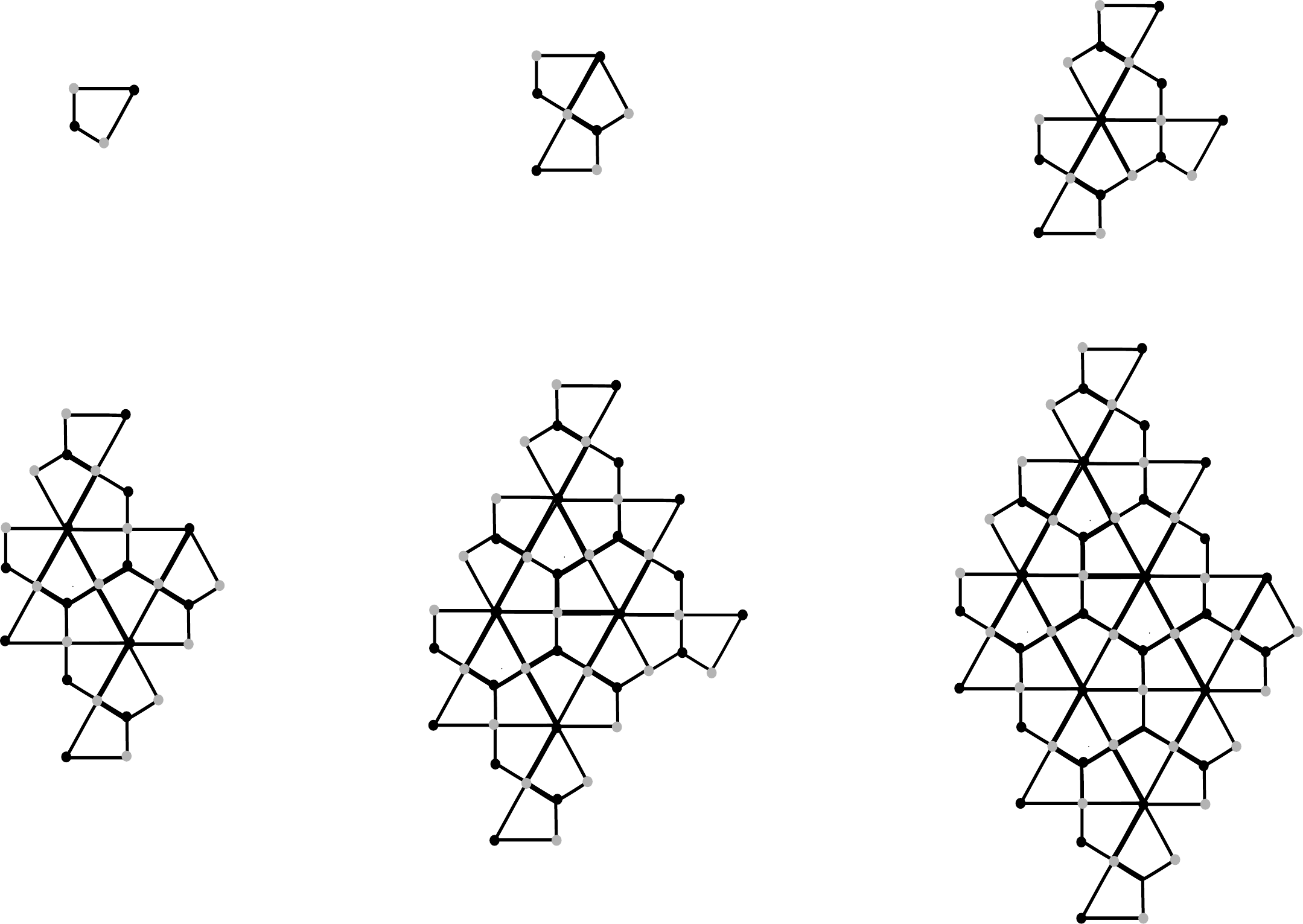
\caption{The first few diamonds, $D_{n/2}$ for $1 \leq n \leq 6$ }
\label{DiamondExample}
\end{figure}

Figure~\ref{DiamondExample} shows the first few diamonds. Note that $D'_{m}$ differs from $D_{m}$ only by 1 or 2 squares.

\begin{figure}
\centering
\def\svgwidth{\columnwidth}
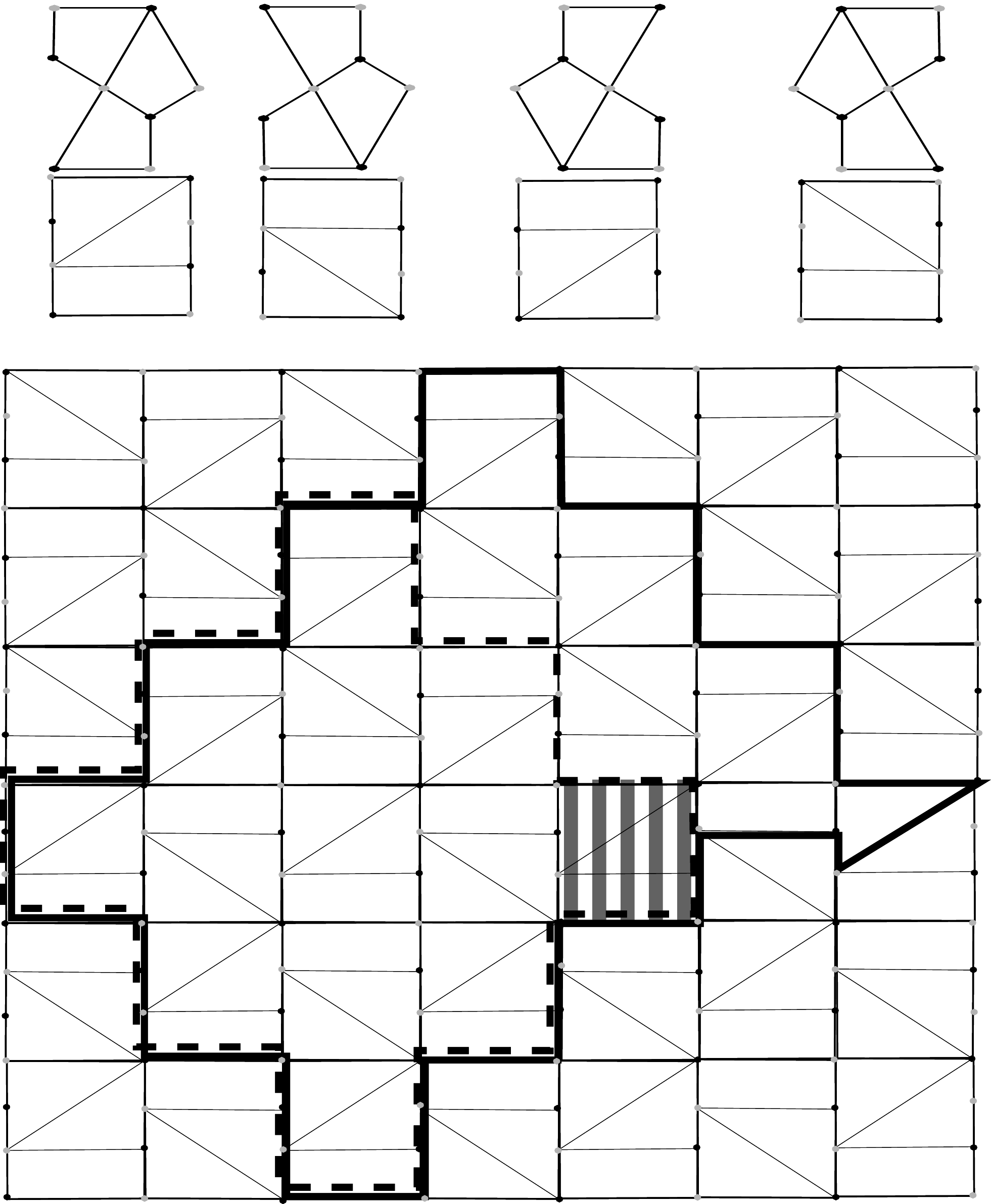
\caption{$D_3$ and $D_{3+1/2}$ drawn in square blocks. The shaded block is $T(0,0)$.}
\label{SquareFormBlocks}
\end{figure}

Figure~\ref{SquareFormBlocks} shows $D_3$ and $D_{3+1/2}$ drawn in square blocks. Informally, for each integer $n$, $D_n$ is the Aztec diamond of order $n$ with squares replaced by blocks, such that the longest row is on $s_0$ and the rightmost block is $\left[ 254\right]$, and $D_{n+1/2}$ resembles an $n$-by-$(n+1)$ Aztec diamond, with two extra squares.

For $m\in \frac{1}{2} \mathbb{Z}_{>0}$, \cite{CY10} gives the number of perfect matchings of $D_m$:
$$ \left|PM(D_m)\right| = \begin{cases} 2^{m(m+1)} &\mbox{if } m \in \mathbb{Z}_{>0} \\ 
                           2^{(m+1/2)^2} & \mbox{if }  m+1/2 \in \mathbb{Z}_{>0}. 
             \end{cases}  $$

\begin{prop}
Setting $ x_i = 1 $, $$ y_N=\left|PM(D_{N/2})\right|.$$
\end{prop}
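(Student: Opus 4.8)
The plan is to reduce the statement, by a short induction on $N$, to the exchange relation~\eqref{exchange} together with the enumeration of $PM(D_m)$ quoted above from \cite{CY10}. Under the specialization $x_i = 1$ all six initial cluster variables become $1$; moreover, since $\sigma$ only permutes the $x_i$, the identity $y'_N = \sigma(y_N)$ gives $y'_N = y_N$ after specialization, for every $N$. Hence~\eqref{exchange}, viewed as an identity of Laurent polynomials and then specialized, becomes
\[ y_N\, y_{N-3} = 2\, y_{N-1}\, y_{N-2} \qquad (N \ge 1), \]
with the convention $y_{-2} = y_{-1} = y_0 = 1$. Note that no division is performed here, so no positivity input is needed beyond the (inductively verified) fact that each specialized $y_N$ is a nonzero power of $2$.

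On the combinatorial side I would first record the small cases. By definition $D_{-1} = D_{-1/2} = D_0 = \emptyset$, each of which has exactly one perfect matching (the empty one), so $|PM(D_{-1})| = |PM(D_{-1/2})| = |PM(D_0)| = 1$, matching $y_{-2}, y_{-1}, y_0$. Writing $N = 2m$ or $N = 2m+1$, the formula of \cite{CY10} says $|PM(D_{N/2})| = 2^{e(N)}$, where $e(N) := m(m+1)$ if $N = 2m$ and $e(N) := (m+1)^2$ if $N = 2m+1$; the polynomial formulas extend to $N \in \{-2,-1,0\}$, where $e$ vanishes, so the combinatorial count and $2^{e(N)}$ agree there as well.

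Next I would show, by strong induction on $N \ge 1$, that the specialized $y_N$ equals $2^{e(N)}$. Granting the inductive hypothesis for $N-1, N-2, N-3$ (the base cases being $y_{-2} = y_{-1} = y_0 = 1 = 2^{e(-2)} = 2^{e(-1)} = 2^{e(0)}$), the displayed relation gives $y_N = 2^{\,e(N-1) + e(N-2) + 1 - e(N-3)}$, so it suffices to verify the numerical identity
\[ e(N) + e(N-3) = e(N-1) + e(N-2) + 1. \]
This splits by the parity of $N$: when $N = 2m$, substituting the closed forms turns both sides into $2m^2 - m + 1$; when $N = 2m+1$, both sides become $2m^2 + m + 1$. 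In either case the identity holds, completing the induction; combined with the count formula of \cite{CY10} this yields $y_N = |PM(D_{N/2})|$.

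There is no serious obstacle. The only points requiring care are (i) keeping track of the small-index conventions $D_k = \emptyset$ for $k \le 0$ and $D_{1/2} =$ a single square, against the initializations $y_{-2} = x_2$, $y_{-1} = x_5$, $y_0 = x_3$, and (ii) not forgetting the factor $2$ produced by the collapse $y'_{N-1}y'_{N-2} = y_{N-1}y_{N-2}$ at $x_i = 1$. All the substantive combinatorics --- the matching count for diamonds --- is imported from \cite{CY10}; this proposition is its numerical shadow, anticipating the weighted refinement proved in the main theorem.
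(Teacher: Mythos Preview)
Your argument is correct and is precisely the ``easy induction'' the paper alludes to in the Remark following the proposition; the paper itself omits the details, noting instead that the claim is a specialization of the main theorem $y_N = w(D_{N/2})m(D_{N/2})$. Your computation of the exponent identity $e(N)+e(N-3)=e(N-1)+e(N-2)+1$ checks in both parities, and your handling of the base cases $N\in\{-2,-1,0\}$ against the convention $D_k=\emptyset$ for $k\le 0$ is fine; the only quibble is the phrase ``no division is performed here,'' since you do divide by $y_{N-3}$ to isolate $y_N$---but you immediately justify this via the inductive hypothesis that $y_{N-3}$ is a nonzero power of $2$, so the logic stands.
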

\begin{rk}
This can be easily checked by induction. The proof is omitted as it will follow from the more general theorem that will be proven later.
\end{rk}

The proposition above makes us suspect that under a suitable weighting on the perfect matchings of $D_{N/2}$, $y_N$ can be expressed as the weight of $D_{N/2}$, up to a monomial factor. By symmetry of $Q$ under $\sigma$, $y'_N$ should be related to $D'_{N/2}$.

We put weights on edges of graphs as follows: for an edge with two neighboring faces labeled $ a $ and $ b $, weight it by $ \frac{1}{x_a x_b} $.

\begin{defn}
The \emph{covering monomial}, $ m(D) $, of a subgraph $ D $ of the brane tiling is a monomial in $\{ {x_i}\}_{1 \leq i \leq 6} $, where the exponent of $ x_i $ counts the number of faces of $ D $ and its neighboring faces in the brane tiling with the label $i$.
\end{defn}

\begin{rk}
There is a more general definition of covering monomial in Jeong's paper \cite{J11}.
\end{rk}

\begin{thm}
For $N \in \mathbb{Z}_{>0}$,
\begin{equation}\label{expansion 1}
y_N = w(D_{N/2})m(D_{N/2}),
\end{equation}
\begin{equation}\label{expansion 2}
y'_N = w(D'_{N/2})m(D'_{N/2}).
\end{equation}
\end{thm}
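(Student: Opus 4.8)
The plan is to induct on $N$, using the exchange relation \eqref{exchange} as the recursion and Eric Kuo's graphical condensation as the matching-theoretic engine, in the spirit of Speyer's proof of the Aztec Diamond Theorem \cite{S04}. First, \eqref{expansion 2} reduces to \eqref{expansion 1}: the brane tiling is invariant under $\sigma$ (the $180^\circ$ rotation together with relabelling by $\sigma$), so $D'_m=\sigma(D_m)$ as a subgraph, and since both the edge weight $\tfrac1{x_ax_b}$ and the covering monomial are $\sigma$-equivariant, $w(D'_m)m(D'_m)=\sigma\big(w(D_m)m(D_m)\big)$; combined with $y'_N=\sigma(y_N)$ this gives \eqref{expansion 2} from \eqref{expansion 1}. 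So it suffices to prove \eqref{expansion 1}, and in the induction I may use \eqref{expansion 2} for smaller indices. The base cases are $N=1,2,3$: here $D_{1/2}$, $D_1$, $D_{3/2}$ are small explicit subgraphs, and one lists their perfect matchings, computes $w$ and $m$ by hand, and compares with $y_1,y_2,y_3$ as obtained from \eqref{exchange} and the initial data $y_{-2}=x_2$, $y'_{-2}=x_4$, $y_{-1}=x_5$, $y'_{-1}=x_1$, $y_0=x_3$, $y'_0=x_6$. These three cases cannot be absorbed into the recursion, because the ``consistency'' values $y_j$ with $j\le0$ are the initial $x_i$ rather than quantities $w(D_{j/2})m(D_{j/2})$; for $N\ge4$, however, \eqref{exchange} involves only $y_k,y'_k$ with $k\ge1$, so the induction proper runs from $N=4$ onward.

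For the inductive step, fix $N\ge4$ and assume \eqref{expansion 1} and \eqref{expansion 2} for all smaller positive indices. Substituting into \eqref{exchange}, it is enough to establish a weighted-matching identity
\[ w(D_{N/2})\,w(D_{(N-3)/2})=M_1\,w(D_{(N-1)/2})\,w(D_{(N-2)/2})+M_2\,w(D'_{(N-1)/2})\,w(D'_{(N-2)/2}) \]
and a covering-monomial identity
\[ m(D_{N/2})\,m(D_{(N-3)/2})=M_1^{-1}\,m(D_{(N-1)/2})\,m(D_{(N-2)/2})=M_2^{-1}\,m(D'_{(N-1)/2})\,m(D'_{(N-2)/2}) \]
for suitable monomials $M_1,M_2$; multiplying them reproduces \eqref{exchange} with $w\cdot m$ in place of $y$.

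The weighted-matching identity will come from Kuo condensation. One selects four vertices $a,b,c,d$ on the outer boundary of $D_{N/2}$, occurring in this cyclic order, with $a,c$ in one colour class of the bipartition and $b,d$ in the other --- the colour pattern that forces the plus sign, matching \eqref{exchange}. Kuo's theorem then yields
\[ w(D_{N/2})\,w(D_{N/2}-a-b-c-d)=w(D_{N/2}-a-b)\,w(D_{N/2}-c-d)+w(D_{N/2}-a-d)\,w(D_{N/2}-b-c), \]
and the core geometric lemma to be proven is that, after deleting the named corners and removing the forced edges thereby created, one has $D_{N/2}-a-b-c-d\cong D_{(N-3)/2}$, $D_{N/2}-a-b\cong D_{(N-1)/2}$, $D_{N/2}-c-d\cong D_{(N-2)/2}$, $D_{N/2}-a-d\cong D'_{(N-1)/2}$, and $D_{N/2}-b-c\cong D'_{(N-2)/2}$ as weighted graphs up to translation; then $M_1,M_2$ are the appropriate quotients of the forced-edge weights. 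Because $D_{N/2}$ is Aztec-diamond-like with the block types $[254],[316]$ (resp.\ $[214],[356]$) alternating, the forced edges stay in the vicinity of the deleted corners, exactly as in \cite{S04}, and the observation that $D'_m$ differs from $D_m$ by one or two squares is precisely what makes $D_{N/2}-a-d$ a $D'$ rather than a $D$. This step naturally splits into two sub-cases according to the parity of $N$ --- whether $D_{N/2}$ is of integer or half-integer order --- and in the half-integer case one must additionally keep track of the two extra squares $S_2,S_3$.

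The covering-monomial identity is a finite exponent count: for each label $i$, compare the number of faces labelled $i$ among $D_{N/2}$ and $D_{(N-3)/2}$ (each taken together with its neighbours in the tiling) with the analogous count for the two smaller diamonds, and verify that the difference is accounted for exactly by the faces incident to the forced edges recorded in $M_1$ (and, separately, in $M_2$); with the explicit block description of the diamonds this is mechanical, modulo the same parity split. I expect the main obstacle to be the geometric lemma of the previous paragraph: choosing the four corner vertices so that they lie in convex position on the outer face and carry the correct colours, and then verifying precisely that the corner-deleted, forced-edge-reduced graphs are the claimed translates of $D_{(N-1)/2}$, $D_{(N-2)/2}$, $D_{(N-3)/2}$ and their $\sigma$-images --- uniformly over the two parities, and with labels tracked finely enough that the covering-monomial identity falls out. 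Everything else --- the $\sigma$-reduction, the three base cases, and the monomial bookkeeping --- is routine once that lemma is in place.
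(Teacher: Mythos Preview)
Your proposal is correct and follows essentially the same route as the paper: induction via the exchange relation, a weighted-matching recursion proved by Kuo condensation (split by parity of $N$), a separate covering-monomial recursion by face counting, and the $\sigma$-reduction for \eqref{expansion 2}. The only cosmetic differences are that the paper uses Speyer's nine-part vertex-partition formulation of condensation rather than the four-vertex deletion version, and it pushes the base cases down to $N=0,1,2$ by formally setting $w(D_0)=1$, $m(D_0)=x_3$ so that $y_0=w(D_0)m(D_0)$ holds and the induction already applies at $N=3$.
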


The theorem follows from comparing the exchange relation with the two recursions on the weights and covering monomials of diamonds.

\section{Recursion on the weights of diamonds}
\begin{prop}
For $ n \in \mathbb{Z}_{\ge 2}$,
 
\begin{align}\label{w recursion 1}
w(D_n)w(D_{n-3/2}) &= w(D_{n-1/2})w(D_{n-1}) \frac{1}{x_1 x_2} \frac{1}{x_3 x_4} \frac{1}{x_5 x_6}\nonumber
\\                 &+ w(D'_{n-1/2})w(D'_{n-1}) \frac{1}{x_1 x_2} \frac{1}{x_2 x_3} \frac{1}{x_3 x_5};
\end{align}

For $ n \in \mathbb{Z}_{\ge 1}$,
\begin{align}\label{w recursion 2}
w(D_{n+1/2})w(D_{n-1}) &= w(D_{n})w(D_{n-1/2}) \frac{1}{x_1 x_3} \frac{1}{x_2 x_6} \frac{1}{x_4 x_5}\nonumber
\\                     &+ w(D'_{n})w(D'_{n-1/2}) \frac{1}{x_1 x_3} \frac{1}{x_2 x_3} \frac{1}{x_2 x_5}.
\end{align}

\end{prop}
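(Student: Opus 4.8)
The plan is to prove the two weight recursions by a graphical decomposition argument, exhibiting both sides as (signed or unsigned) sums over perfect matchings that share a common structure near a distinguished block. The two identities are structurally parallel — one governs the ``half-step'' $n-1 \to n-1/2$ replacement and the other the $n-1/2 \to n$ replacement — so I would set up a uniform framework and then specialize.

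\textbf{Step 1: Kuo condensation setup.} The natural tool here is Kuo's graphical condensation (the combinatorial analogue of the Desnanot--Jacobi / Dodgson identity). For a planar bipartite graph $G$ with four marked vertices $a,b,c,d$ appearing in cyclic order on a face, Kuo's theorem gives
\begin{equation*}
w(G)\, w(G \setminus \{a,b,c,d\}) = w(G\setminus\{a,c\})\, w(G\setminus\{b,d\}) \pm w(G\setminus\{a,d\})\, w(G\setminus\{b,c\}),
\end{equation*}
where $w$ denotes the weighted sum of perfect matchings and the sign is fixed by the bipartite classes of the marked vertices. So the first task is to identify, inside $D_n$ (together with the larger graph obtained by also including the ``extra'' squares that distinguish $D_n$ from the half-integral diamonds), four boundary vertices whose four deletions reproduce, up to forced edges, the four diamonds $D_{n-3/2}$, $D_{n-1/2}$, $D_{n-1}$, $D'_{n-1/2}$, $D'_{n-1}$. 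The block structure from the definition of $D_n$ makes this plausible: the outermost corner blocks $T(\pm(n-1),0)$, $T(0,\pm(n-1))$ are exactly where a diamond of order $n$ protrudes beyond one of order $n-1/2$, and peeling a corner block off corresponds to the ``$\setminus$'' operations.

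\textbf{Step 2: Matching deletions to diamonds and extracting monomial factors.} After choosing the four vertices, each of the five or six resulting subgraphs will, after removing vertices that are forced into the matching (degree-one vertices created by the deletion), become a translate of one of the diamonds $D_{k/2}$ or $D'_{k/2}$. The forced edges carry weight, and because our edge weights are $1/(x_a x_b)$ with $a,b$ the two adjacent face labels, the product of forced-edge weights along one corner is precisely one of the monomials $\frac{1}{x_1 x_2}\frac{1}{x_3 x_4}\frac{1}{x_5 x_6}$, etc., appearing in the statement. Here is where the asymmetry between the unprimed and primed diamonds enters: $D'_m$ differs from $D_m$ by one or two squares, and the choice of which pair of opposite corners we delete determines whether we land on $D_{n-1/2}$ or on $D'_{n-1/2}$; the two monomials in each recursion reflect the two different corner-pairings and hence two different sets of forced edges. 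I would verify the bookkeeping on the small cases $D_2, D_{3/2}, D_1, D_{1/2}$ (available from Figure~\ref{DiamondExample}) to pin down the correspondence and confirm the exponents, and also check the base case $n=2$ of \eqref{w recursion 1} and $n=1$ of \eqref{w recursion 2} directly, since for small $n$ some diamonds are empty and one must confirm the conventions $D_k = \emptyset$, $w(\emptyset)=1$ are consistent.

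\textbf{Step 3: The sign.} Kuo's identity comes with a sign determined by the positions of the four deleted vertices in the two color classes of the bipartition. Since both recursions in the Proposition are stated with a plus sign, I must check that the four chosen vertices consist of two of each color arranged so that the condensation sign is $+$; if the naive corner choice gives the wrong sign, I would instead use the variant of Kuo's theorem (there are four versions, depending on whether the marked vertices split $2$-$2$ or $3$-$1$ or $4$-$0$ between the color classes) that yields a plus. This sign determination, together with the precise identification in Step 2 that the ``wrong-pairing'' term is genuinely $w(D'_{\bullet})w(D'_{\bullet})$ and not something spurious, is the main obstacle: it is entirely mechanical but requires care with the $180^\circ$-rotation-plus-$\sigma$-relabeling that defines the primed family, since a careless choice could produce $D'$ in a reflected rather than rotated position. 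Once Steps 1--3 are in place, both displayed identities follow by reading off the Kuo relation for the appropriate marked $4$-tuple and multiplying through by the forced-edge weights, and the parallel structure of \eqref{w recursion 1} and \eqref{w recursion 2} means the second follows from the first by essentially the same argument applied one half-step over.
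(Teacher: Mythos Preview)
Your proposal is correct and is essentially the paper's proof: both use Kuo's graphical condensation on $D_m$, identify the subgraphs arising from the condensation with the smaller diamonds $D_{m-3/2}$, $D_{m-1/2}$, $D_{m-1}$, $D'_{m-1/2}$, $D'_{m-1}$ (up to forced edges), and read off the monomial factors as weights of those forced edges. The only cosmetic difference is that the paper invokes Speyer's nine-part vertex-partition formulation of Kuo's theorem rather than the four-marked-vertex version you cite, which packages the color/sign check of your Step~3 into the hypotheses of the partition and makes the identification of the subgraphs with the diamonds slightly more transparent.
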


\begin{proof}
Let $ m=n$ or $n+1/2$ be a half-integer. We prove this by using graphical condensation, i.e. we need to show that a superposition of perfect matchings of $ D_m $ and $D_{m-3/2} $ can always be decomposed into a matching of $ D_{m-1/2} $ and $D_{m-1} $ or $ D'_{m-1/2} $ and $D'_{m-1} $, with three extra edges, but not in both ways. 

We will use Speyer's \cite[page 37]{S04} formulation of Kuo's graphical condensation theorem \cite{K04} stated as follows. If a planar bipartite graph $G$ has its vertices partitioned into $$V(G)=\mathbf{N} \sqcup \mathbf{NE} \sqcup \mathbf{E} \sqcup \mathbf{SE} \sqcup \mathbf{S}\sqcup \mathbf{SW}\sqcup \mathbf{W}\sqcup \mathbf{NW}\sqcup \mathbf{C}$$ such that:

\begin{enumerate}
	\item Possible edge connection among the nine parts as described in Figure~\ref{EdgeConnection};
     \item \textbf{NW}, \textbf{SE} each contains one more black vertex than white vertex, \textbf{NE}, \textbf{SW} each contains one more white vertex than black vertex, and \textbf{C}, \textbf{N}, \textbf{E}, \textbf{S}, \textbf{W} each contains the same number of vertices in either color;
	\item Boundary vertices are all black for \textbf{NW} and \textbf{SE}, and all white for \textbf{NE}, \textbf{SW}.
\end{enumerate} 

Then 
\begin{align} \label{Condensation}
	w(G)w(\mathbf{C})=&w(\mathbf{W} \cup \mathbf{NW} \cup \mathbf{SW} \cup \mathbf{C})w(\mathbf{E} \cup \mathbf{NE} \cup \mathbf{SE} \cup \mathbf{C})w(\mathbf{S})w(\mathbf{N})+  \nonumber
\\   &w(\mathbf{S} \cup \mathbf{SE} \cup \mathbf{SW} \cup \mathbf{C})w(\mathbf{N} \cup \mathbf{NE} \cup \mathbf{NW} \cup \mathbf{C})w(\mathbf{W})w(\mathbf{E}).
\end{align}
\begin{figure}
\centering
\includegraphics[scale=0.5]{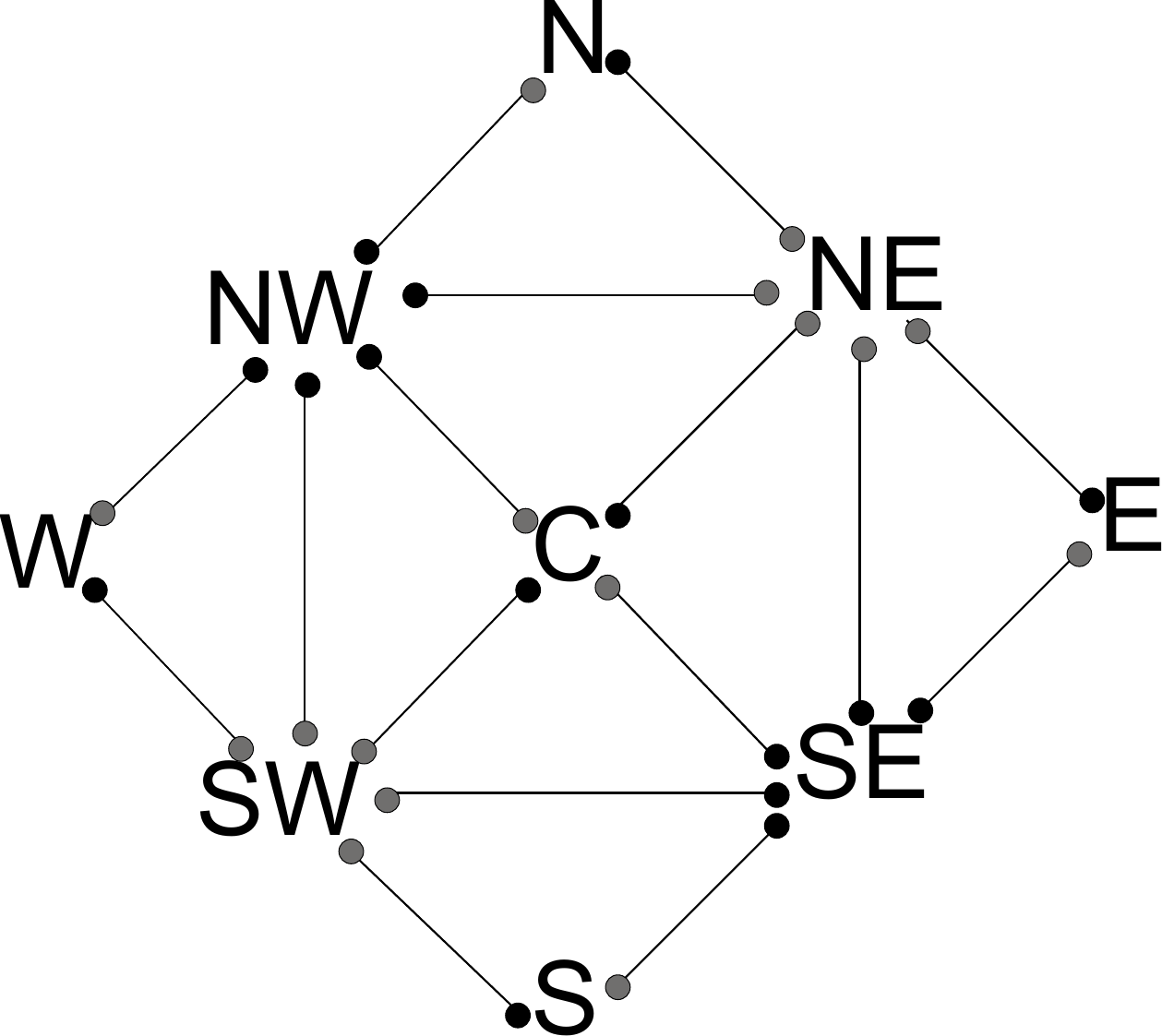}
\caption{Possible edge connection among nine vertex components}
\label{EdgeConnection}
\end{figure}

\begin{figure}
\centering
\includegraphics[width=\textwidth]{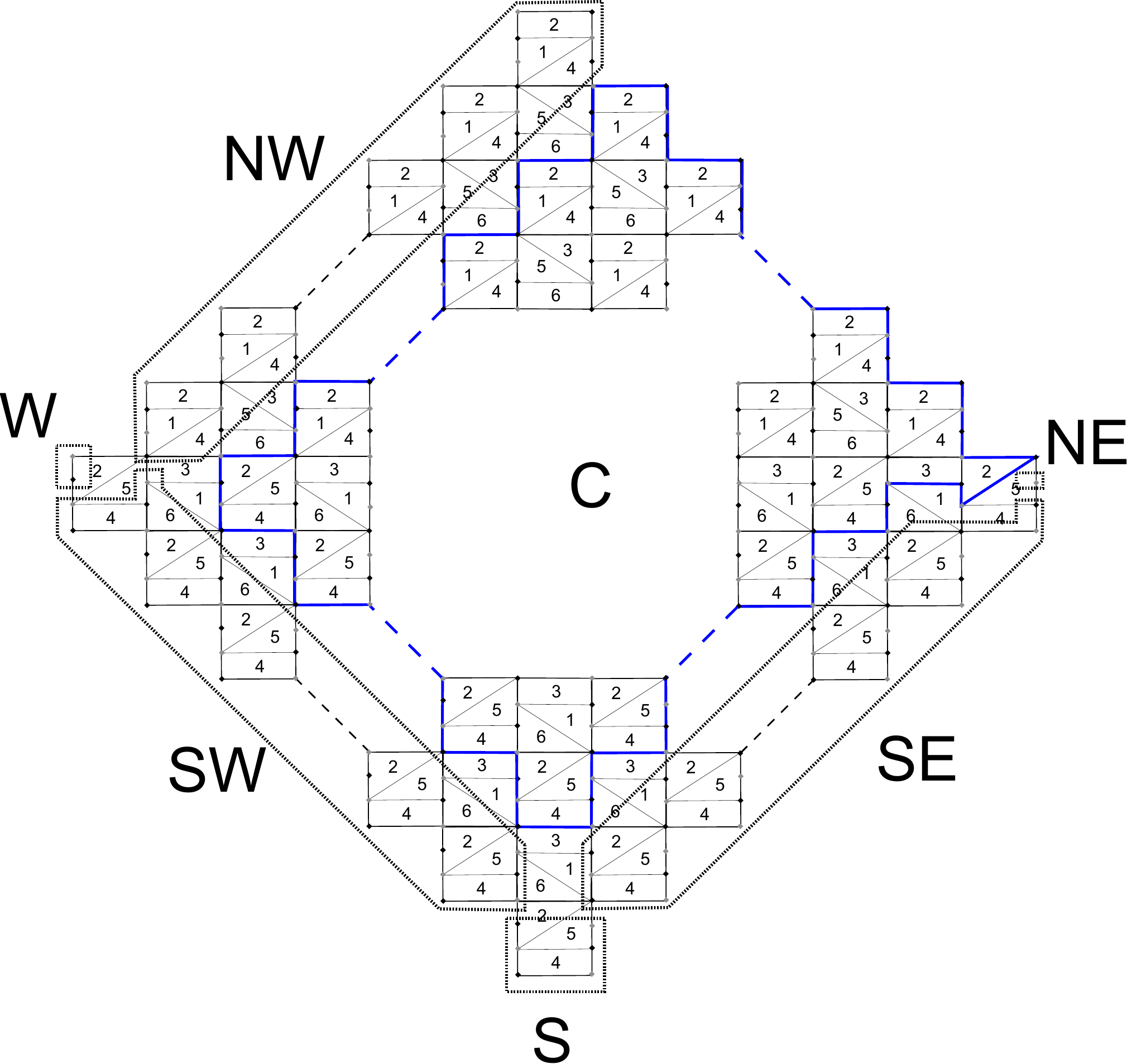}
\caption{Partitioning the vertices of $D_n$ into 7 parts}
\label{speyer_decomposing_D_n}
\end{figure}

Now for $m=n \in \mathbb{Z}_{\ge 3}$, partition the vertices of $D_n$ as shown in Figure~\ref{speyer_decomposing_D_n}. In fact $\mathbf{N}=\mathbf{E}=\emptyset$. It is easy to see that conditions 1 through 3 are all satisfied, so the method of graphical condensation can be applied, yielding \eqref{Condensation}. The LHS of \eqref{Condensation} agrees with the LHS of \eqref{w recursion 1} since $\mathbf{C}=D_{n-3/2}$. We can see from Figure~\ref{decomposing_D_n} that the RHS of \eqref{Condensation} also agrees with the RHS of \eqref{w recursion 1}:  
\begin{align*}
\mathbf{W} \cup \mathbf{NW} \cup \mathbf{SW} \cup \mathbf{C}&=D_{n-1/2},\\
\mathbf{E} \cup \mathbf{NE} \cup \mathbf{SE} \cup \mathbf{C}&=D_{n-1},\\
w(\mathbf{S}) &= \frac{1}{x_1 x_2} \frac{1}{x_3 x_4}\frac{1}{x_5 x_6};\\
\mathbf{S} \cup \mathbf{SE} \cup \mathbf{SW} \cup \mathbf{C} &= D'_{n-1/2},\\
w(\mathbf{N} \cup \mathbf{NE} \cup \mathbf{NW} \cup \mathbf{C}) &= w(D'_{n-1}) \frac{1}{x_1 x_2} \frac{1}{x_3 x_5},\\
w(\mathbf{W}) &= \frac{1}{x_2 x_3}.
\end{align*}
Note that although $\mathbf{N} \cup \mathbf{NE} \cup \mathbf{NW} \cup \mathbf{C} \neq D'_{n-1}$, its perfect matchings must contain two of the edges (colored in red), with weights $\frac{1}{x_1 x_2}$ and $\frac{1}{x_3 x_5}$, leaving a perfect matching of $D'_{n-1}$. Also, $w(\mathbf{N})=w(\mathbf{E})=w(\emptyset)=1$.

The case $n=2$ involves the diamond $D_{1/2}$ which is defined separately to be a single square labeled 2. In fact the vertices of $D_2$ can still be partitioned as shown in Figure~\ref{speyer_decomposing_D_n}.

\begin{figure}
\centering
\def\svgwidth{\columnwidth}
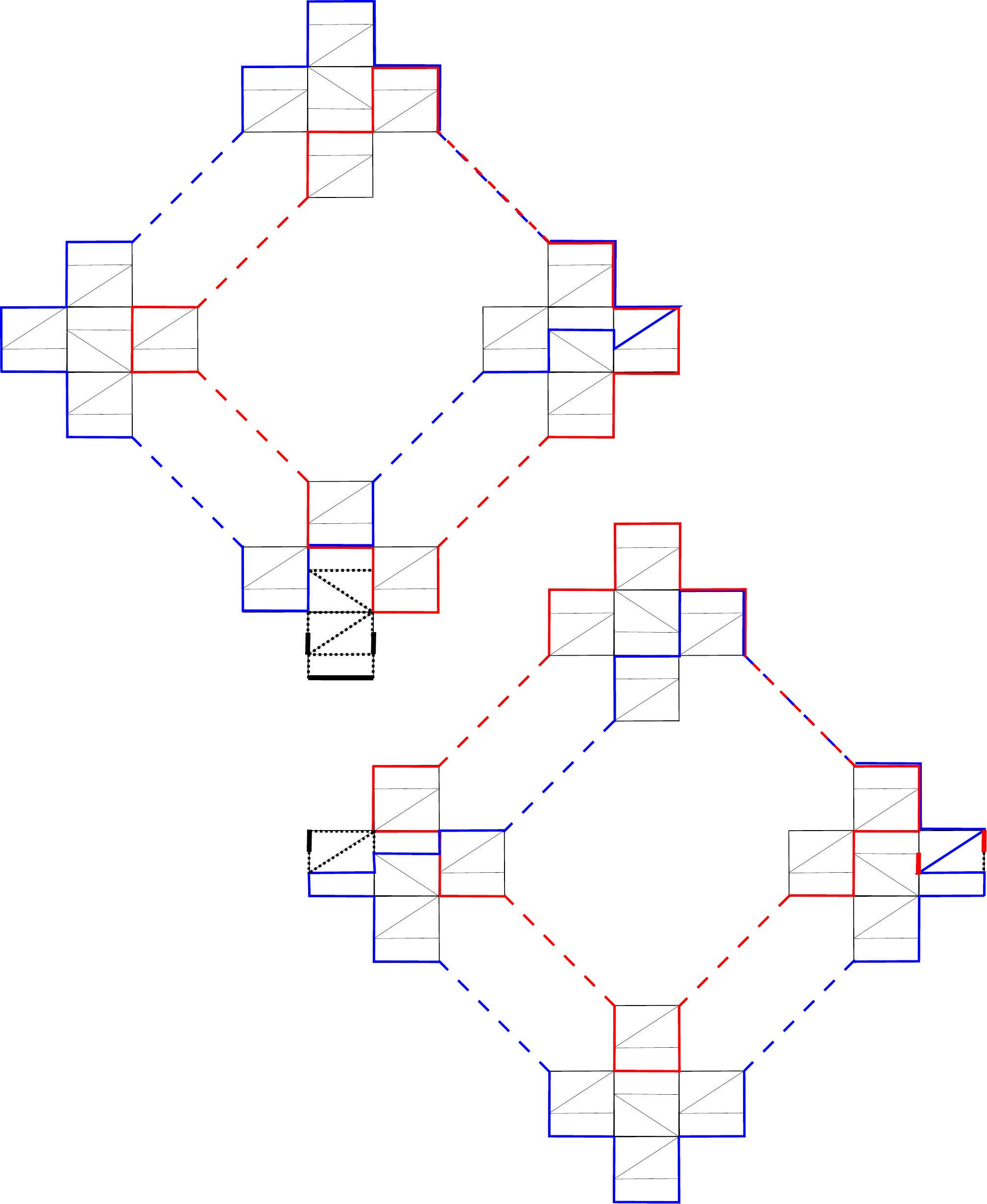
\caption{$D_n$ plus $D_{n-3/2}$ decomposed into two smaller diamonds, in two ways}
\label{decomposing_D_n}
\end{figure}

Equation \eqref{w recursion 2} is similarly proven by applying graphical condensation; see Figure~\ref{speyer_decomposing_D_nhalf} and Figure~\ref{decomposing_D_nhalf} for $n \ge 2$. In fact Figure~\ref{speyer_decomposing_D_nhalf} is valid even when $\mathbf{C}=\emptyset=D_0$, so \eqref{w recursion 2} also holds for $n=1$.

\begin{figure}
\centering
\includegraphics[width=\textwidth]{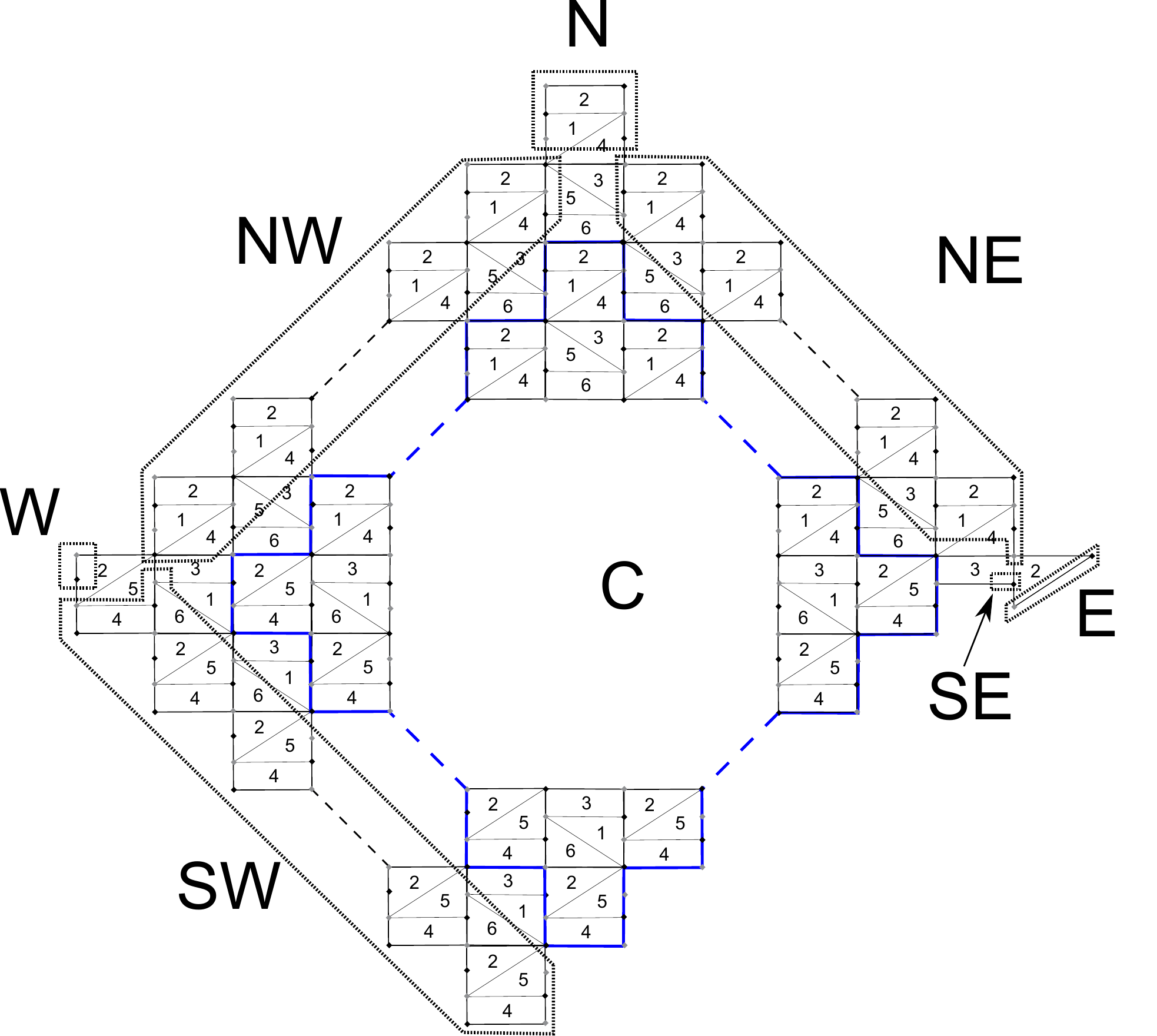}
\caption{Partitioning the vertices of $D_{n+1/2}$ into 7 parts}
\label{speyer_decomposing_D_nhalf}
\end{figure}

\begin{figure}
\centering
\def\svgwidth{\columnwidth}
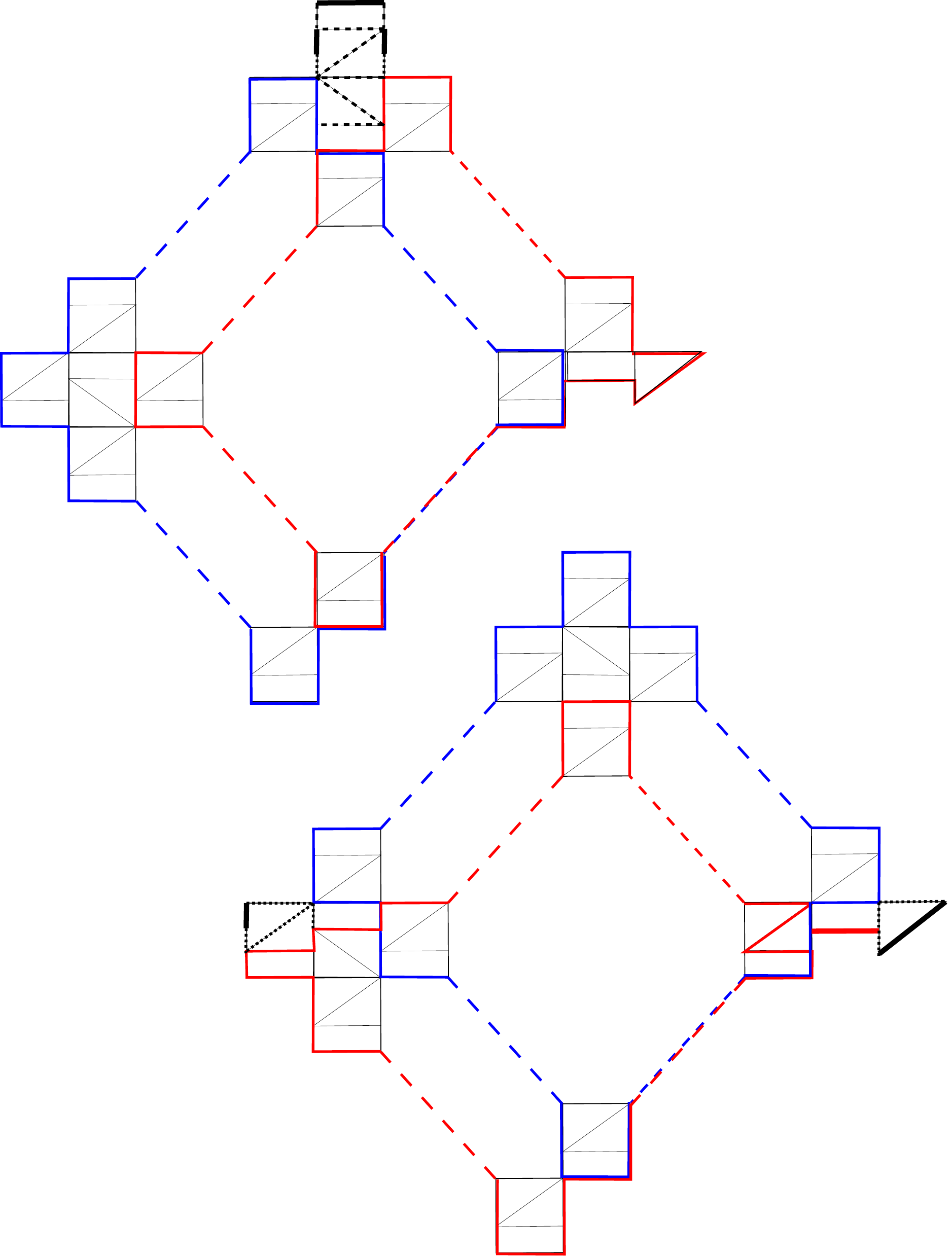
\caption{$D_{n+1/2}$ plus $D_{n-1}$ decomposed into two smaller diamonds, in two ways}
\label{decomposing_D_nhalf}
\end{figure}

\end{proof}

\section{Recursion on the covering monomials of diamonds}
\begin{prop}
For $ n \in \mathbb{Z}_{\ge 2}$,
\begin{align}\label{m recursion 1}
m(D_n)m(D_{n-3/2}) &= m(D_{n-1/2})m(D_{n-1})(x_1 x_2)(x_3 x_4)(x_5 x_6) \nonumber
\\                   &= m(D'_{n-1/2})m(D'_{n-1})(x_1 x_2)(x_2 x_3)(x_3 x_5);
\end{align}
\begin{align}\label{m recursion 2}
m(D_{n+1/2})m(D_{n-1}) &= m(D_{n})m(D_{n-1/2})(x_1 x_3)(x_2 x_6)(x_4 x_5)\nonumber
\\                       &= m(D'_{n})m(D'_{n-1/2})(x_1 x_3)(x_2 x_3)(x_2 x_5).
\end{align}
\end{prop}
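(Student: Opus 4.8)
The plan is to reduce each of \eqref{m recursion 1} and \eqref{m recursion 2} to an identity among exponent vectors and then verify those by a direct, face-by-face count; unlike the weight recursions this needs no graphical condensation. For a subgraph $D$ of the $dP_3$ lattice write $\widehat D$ for the set of faces that either belong to $D$ or are adjacent (in the lattice) to a face of $D$, and let $c(D)=\bigl(c_1(D),\dots,c_6(D)\bigr)\in\mathbb Z_{\ge 0}^6$ count how many faces of $\widehat D$ carry each label, so $m(D)=\prod_i x_i^{c_i(D)}$. Each correcting monomial appearing in \eqref{m recursion 1} and \eqref{m recursion 2} is either $x_1x_2x_3x_4x_5x_6$ or $x_1x_2^2x_3^2x_5$, so the two displayed equations are together equivalent to the three vector identities
\begin{align*}
c(D_n)+c(D_{n-3/2})&=c(D_{n-1/2})+c(D_{n-1})+(1,1,1,1,1,1),\\
c(D_{n+1/2})+c(D_{n-1})&=c(D_{n})+c(D_{n-1/2})+(1,1,1,1,1,1),\\
\delta(m)+\delta(m-1/2)&=(0,1,1,-1,0,-1),\qquad\text{where }\delta(k):=c(D_k)-c(D'_k),
\end{align*}
the first for $n\ge 2$, the second for $n\ge 1$, the third for every half-integer $m\ge 1$; the third identity encodes the equality of the two right-hand sides of each displayed equation.

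For the two shell identities: writing $A(m):=c(D_m)-c(D_{m-1})$ for the label-vector of the annulus $\widehat{D_m}\setminus\widehat{D_{m-1}}$, a rearrangement turns both of them into the single assertion $A(m)=A(m-1/2)+(1,1,1,1,1,1)$ (identity one gives it for integer $m$, identity two for half-integer $m$). I would prove this by overlaying the two annuli. Each $\widehat{D_m}\setminus\widehat{D_{m-1}}$ is, once $m$ is large enough, a frame of fixed cross-section running around the boundary of $D_{m-1}$; increasing $m$ by a half-step enlarges $D_{m-1}$ by one layer of blocks on part of its boundary (for half-integer orders together with the exceptional squares $S_2,S_3$) and correspondingly lengthens the frame, and along the lengthened, essentially straight stretches the six labels of the $dP_3$ lattice recur with period one fundamental domain, so the net gain is exactly one face of each label; the four corner configurations and the contributions of $S_2,S_3$ and of the triangular protrusion in the definition of $D_{n+1/2}$ are fixed finite patterns that are identical for the two annuli being overlaid and therefore cancel. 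The small cases $n=2$ in \eqref{m recursion 1} and $n=1$ in \eqref{m recursion 2}, which involve $D_{1/2}$ (a single square labelled $2$) and $D_0=\emptyset$, I would verify directly from Figures~\ref{DiamondExample} and \ref{SquareFormBlocks}.

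For the discrepancy identity: note first that $(0,1,1,-1,0,-1)$ is negated by the coordinate permutation $(15)(24)(36)$, as it must be, since $D'_k$ is the image of $D_k$ under the lattice automorphism $\sigma$ ($180^\circ$ rotation followed by relabelling $i\mapsto\sigma(i)$), closure commutes with $\sigma$, hence $c(D'_k)=\sigma\!\cdot\!c(D_k)$ and $\delta(k)=(1-\sigma)c(D_k)$ is automatically $\sigma$-anti-invariant. Equivalently, as remarked in the excerpt, $D'_k$ occupies the same faces as $D_k$ apart from one or two squares near the ends of the diamond. Either description shows $\delta(k)$ is a bounded vector that stabilizes once the local picture around those discrepant squares stops changing with $k$, so $\delta$ takes a constant value $\delta_{\mathbb Z}$ on all large integers and a constant value $\delta_{1/2}$ on all large half-integers; a short inspection of those local pictures gives $\delta_{\mathbb Z}+\delta_{1/2}=(0,1,1,-1,0,-1)$, and the finitely many remaining small $m$ are handled directly.

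The step I expect to be the main obstacle is the corner-and-boundary bookkeeping in the shell identities: one has to be sure that each face of $\widehat{D_m}\setminus\widehat{D_{m-1}}$ is counted exactly once (a face adjacent to several blocks of the new layer must not be double-counted) and that the corner patterns truly coincide for the two annuli rather than merely resemble one another; the irregular shape of $D_{n+1/2}$ (protrusion plus $S_2,S_3$) makes the half-order annuli less uniform than the integer ones and deserves its own figure. A cleaner but more computational alternative would be to write down closed-form expressions for $m(D_n)$, $m(D_{n+1/2})$ and their primed versions — each exponent $c_i$ being a quadratic quasi-polynomial in the order whose leading part is independent of $i$ while the lower-order parts carry the boundary corrections — and then verify \eqref{m recursion 1} and \eqref{m recursion 2} by substitution, pinning down the constants from the small diamonds.
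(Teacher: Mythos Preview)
Your reduction to the three exponent-vector identities is correct, and your ``cleaner but more computational alternative'' is exactly the paper's primary proof: it writes down closed forms $c(D_n)=(n^2,n^2,n^2+n+1,n^2,n^2,n^2+n)$ and $c(D_{n+1/2})=(n^2+n+1,n^2+n+1,(n+1)^2,n^2+n,n^2+n+1,(n+1)^2)$ by counting blocks and boundary neighbours, notes $c(D'_m)=\sigma\cdot c(D_m)$, and substitutes. From these formulas one also reads off $\delta(n)=(0,0,1,0,0,-1)$ and $\delta(n+1/2)=(0,1,0,-1,0,0)$, confirming your third identity instantly rather than through a stabilisation argument.

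Where your main line differs is in the geometric verification. You pass to the shells $A(m)=c(D_m)-c(D_{m-1})$ and compare $A(m)$ with $A(m-1/2)$, which forces you to track corners along two long frames. The paper's picture proof avoids this by staying with the four original covers: it overlays $\widehat{D_{m-1/2}}$ and $\widehat{D_{m-1}}$ (resp.\ $\widehat{D'_{m-1/2}}$ and $\widehat{D'_{m-1}}$), placed so that they genuinely sit inside $\widehat{D_m}$, and observes that their intersection is exactly $\widehat{D_{m-3/2}}$ while their union together with six explicit faces is $\widehat{D_m}$. Since that six-face discrepancy is a fixed local pattern independent of $n$, there is no boundary bookkeeping at all, and the same six faces are paired by the three forced edges appearing in the weight recursion, which is why the extra monomials cancel in the final theorem. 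Your shell argument would work, but the four-cover overlay buys exactly the simplification you were worried about.
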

\begin{proof}
We will count the squares and compute both sides. Let $n \in \mathbb{Z}_{\ge 1}$, define $ f_n \in \mathbb{N}^6 $ where $ (f_n)_i $ is the number of squares labeled $ i $ in $ D_n $.

The number of blocks of $[254]$ in $ D_n $ is 
$$ 1+ 2+ \ldots + n = \frac{1}{2} n(n+1).$$ Similarly the number of blocks of $[316]$, $[214]$, $[356]$ is respectively $ \frac{1}{2} n(n-1), \frac{1}{2} n(n-1), \frac{1}{2} (n-1)(n-2) .$

Using multi-index notation,
\begin{align*}
\mathbf{x}^{f_n} &= (x_2 x_5 x_4)^{n(n+1)/2}(x_3 x_1 x_6)^{n(n-1)/2} (x_2 x_1 x_4)^{n(n-1)/2}(x_3 x_5 x_6)^{(n-1)(n-2)/2}\\
&=\mathbf{x}^{(n(n-1), n^2, (n-1)^2, n^2, n(n-1)+1, (n-1)^2)},
\end{align*}
i.e.
$$ f_n = (n(n-1), n^2, (n-1)^2, n^2, n(n-1)+1, (n-1)^2).$$
Similarly $$ f_{n+1/2} = (n^2, n(n+1)+1, n(n-1)+1, n(n+1), n^2, n(n-1)).$$

Let $ (h_n)_i $ be the number of neighboring squares of $ D_n $ labeled $ i $, then

\begin{align*}
\mathbf{x}^{h_n} &= x_6\left[(x_3 x_6)(x_5 x_6)\right]^{n-1}\left[(x_1 x_3)(x_3 x_6)\right]^n x_3\\
			 &= \mathbf{x}^{(n, 0, 3n, 0, n-1, 3n-1)}, 
\end{align*}
and
\begin{align*}
\mathbf{x}^{h_{n+1/2}} &=x_6\left[(x_3 x_6)(x_5 x_6)\right]^n \left[(x_1 x_3)(x_3 x_6)\right]^{n-1}(x_1 x_3)(x_1 x_6 x_5)\\ 
			 &= \mathbf{x}^{(n+1, 0, 3n, 0, n+1, 3n+1)}. 
\end{align*}
So,
\begin{align}
\nonumber
m(D_n) &= \mathbf{x}^{(f_n+h_n)} \\
\nonumber
       &=\mathbf{x}^{(n(n-1), n^2, (n-1)^2, n^2, n(n-1)+1, (n-1)^2)+(n, 0, 3n, 0, n-1, 3n-1)}\\ 
\label{mDn}
       &= \mathbf{x}^{(n^2, n^2, n^2+n+1, n^2, n^2, n^2+n)}, 
\end{align}
\begin{align}
\nonumber
m(D_{n+1/2}) &= \mathbf{x}^{(f_{n+1/2}+h_{n+1/2})} \\
\nonumber
             &=\mathbf{x}^{(n^2, n(n+1)+1, n(n-1)+1, n(n+1) +(n+1, 0, 3n, 0, n+1, 3n+1)}\\
\label{mDnhalf}
             &= \mathbf{x}^{(n^2+n+1, n^2+n+1, n^2+2n+1, n^2+n, n^2+n+1, n^2+2n+1)},
\end{align}
and $m(D'_m)=\sigma (m(D_m))$ (for any positive half-integer $m$) by permuting entries in the exponent. Note that although $f$ and $h$ above give the wrong count for the special case $D_{1/2}$, \eqref{mDnhalf} for $n=0$ still gives the correct expression for $m(D_{1/2})=x_1 x_2 x_3 x_5 x_6$. Also, formally define $m(D_0)$ by \eqref{mDn}: $m(D_0)=x_3$.

So, for all $n \in \mathbb{Z}_{\ge 2}$ the expressions in \eqref{m recursion 1} all evaluate to
$$ \mathbf{x}^{(2n^2-3n+3,2n^2-3n+3,2n^2-n+2,2n^2-3n+2,2n^2-3n+3,2n^2-n+1)},$$
and for all $n \in \mathbb{Z}_{\ge 1}$ the expressions in \eqref{m recursion 2} all evaluate to
$$ \mathbf{x}^{(2 n^2-n+2, 2 n^2-n+2, 2 n^2+n+2, 2 n^2-n+1, 2 n^2-n+2, 2 n^2+n+1)}.$$

In fact, it is easier to see the proposition is true without explicit computations, but by simply looking at the ``cover" of diamonds (i.e. faces plus neighboring faces) and comparing their overlaps.

\begin{figure}
\centering
\includegraphics[scale=0.6]{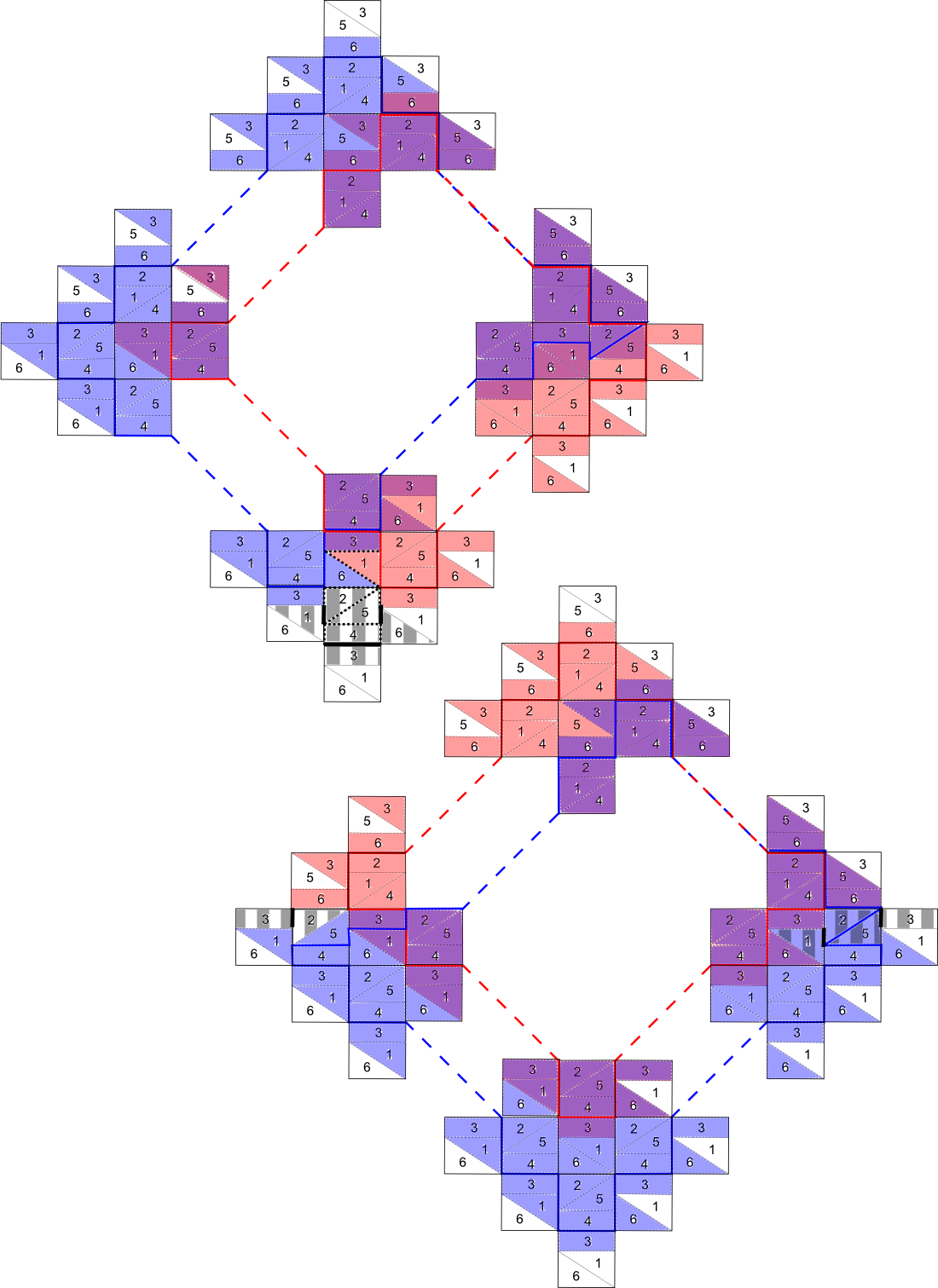}
\caption{``cover" of $D_{m}$ plus $D_{m-3/2}$ decomposed into ``cover" of two smaller diamonds, in two ways}
\label{coverD_n}
\end{figure}

In the upper half of Figure~\ref{coverD_n}, the ``covers" of $D_{n-1/2}$ (colored blue) and $D_{n-1}$ (red) intersect at a purple area, which is exactly the ``cover" of $D_{n-3/2}$, while their union together with 6 extra faces (shaded) is exactly the ``cover" of $D_n$, proving the first equality of \eqref{m recursion 1}. The lower half of Figure~\ref{coverD_n} proves the second equality of \eqref{m recursion 1}, and Figure~\ref{coverD_nhalf} proves \eqref{m recursion 2}.

Note that the 6 extra faces (shaded) in Figure~\ref{coverD_n} and Figure~\ref{coverD_nhalf} are paired up by the 3 extra edges (thickened), which combines to cancel out the extra edge weight in \eqref{w recursion 1} and \eqref{w recursion 2}.
\begin{figure}
\centering
\includegraphics[scale=0.6]{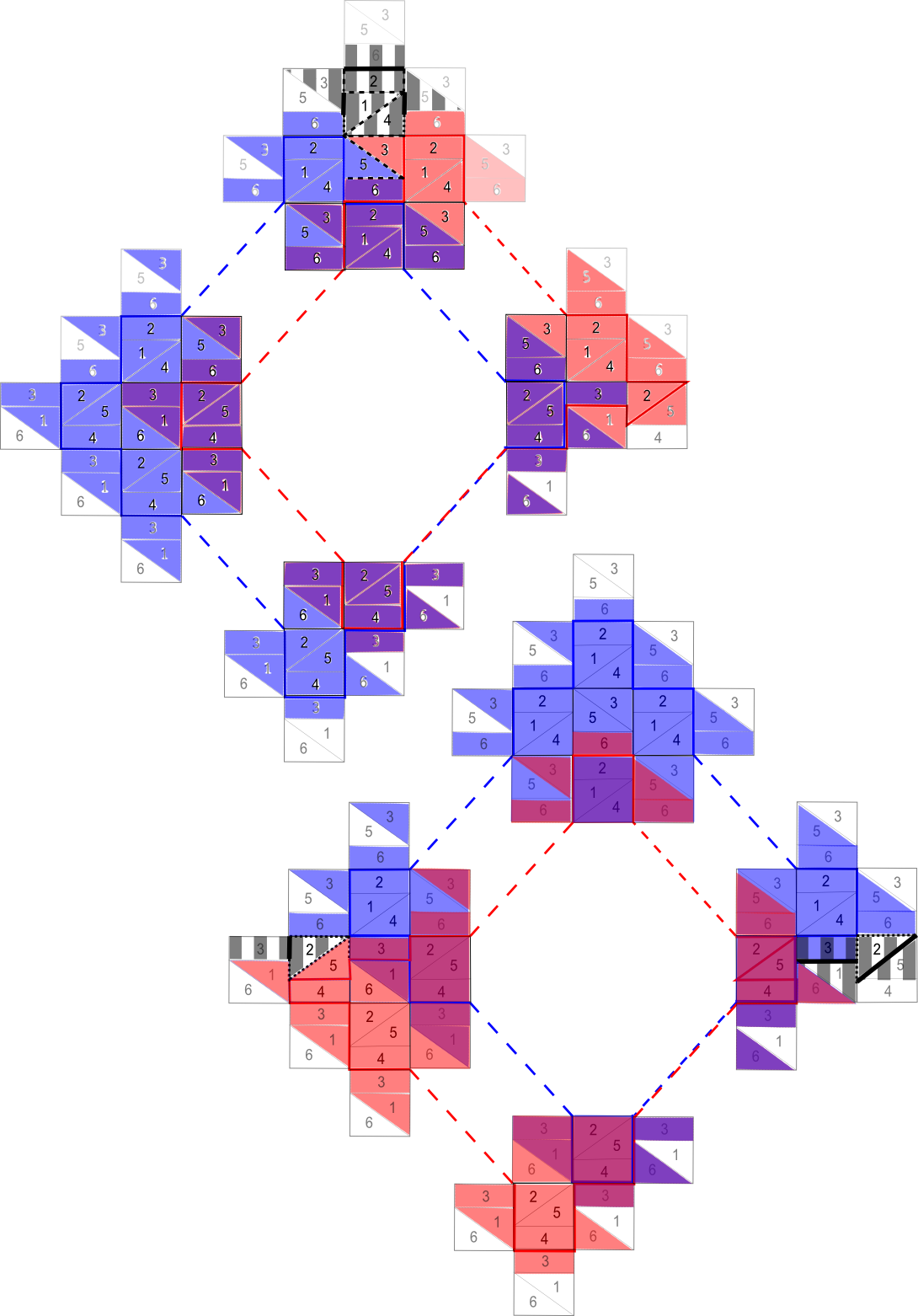}
\caption{``cover" of $D_{m+1/2}$ plus $D_{m-1}$ decomposed into ``cover" of two smaller diamonds, in two ways}
\label{coverD_nhalf}
\end{figure}

\end{proof}

\section{Proof of the theorem}
\begin{proof}
Let $N \ge 3$ and assume $y_k=w(D_{k/2})m(D_{k/2}), y'_k=w(D'_{k/2})m(D'_{k/2})$ for all $k < N$. Write $z_N=w(D_{N/2})m(D_{N/2})$. Apply \eqref{w recursion 1} and \eqref{m recursion 1} if $N$ is odd, or \eqref{w recursion 2} and \eqref{m recursion 2} if $N$ is even:
\begin{align*}
z_N y_{N-3}  &=  w(D_{N/2})m(D_{N/2})w(D_{N/2-3/2})m(D_{N/2-3/2})\\
             &=w(D_{N/2-1/2})w(D_{N/2-1})m(D_{N/2-1/2})m(D_{N/2-1})\\
             &\;+w(D'_{N/2-1/2})w(D'_{N/2-1})m(D'_{N/2-1/2})m(D'_{N/2-1})\\
             &= y_{N-1} y_{N-2} + y'_{N-1} y'_{N-2}\\
		  &= y_N y_{N-3}
\end{align*}
agrees with  exchange relation \eqref{exchange}, so $z_N=w(D_{N/2})m(D_{N/2}) = y_N$. Act by $\sigma$ on both sides: $w(D'_{N/2})m(D'_{N/2}) = y'_N$. The base case of $N=0$ becomes: $y_0=1 \cdot x_3=w(D_0)m(D_0)$, $y'_0=1 \cdot x_6=w(D'_0)m(D'_0)$ (not interesting, but it is formally consistent). The base cases of $N=1, 2$ can be checked directly, or are tree phenomena as discussed by Jeong \cite{J11}.
\end{proof} 

\section{Acknowledgments}
This research was conducted in the 2012 summer REU program in the University of Minnesota, Twin Cities. I was supported by the Columbia University Rabi Scholars Program. I would like to thank Vic Reiner, Gregg Musiker, Pavlo Pylyavskyy and Dennis Stanton for mentoring the program. I am especially grateful to Gregg Musiker for his guidance and encouragement throughout this project. I also thank Gregg Musiker and Alexander Garver for reading through and commenting on an earlier draft. This REU report previously appeared on Vic Reiner's REU website at 

{\tt \verb+http://www.math.umn.edu/~reiner/REU/Zhang2012.pdf+}.

\bibliographystyle{plain}
\bibliography{mathREU}
\end{document}